\documentclass[11pt]{amsart}
\usepackage[T1]{fontenc}
\usepackage{lmodern}
\usepackage[a4paper,margin=28mm]{geometry}
\usepackage{amsmath,amssymb,amsthm,mathtools}
\usepackage{microtype}
\usepackage{enumitem}
\usepackage{needspace}
\usepackage[hidelinks]{hyperref}
\hypersetup{pdftitle={A continuous 3-distributive frame that is not omega-distributive},
  pdfsubject={A negative answer to Erne's distributivity and web-space questions},
  pdfkeywords={kappa-distributivity, wide coframe, continuous frame, wide web space}}

\newtheorem{theorem}{Theorem}[section]
\newtheorem{proposition}[theorem]{Proposition}
\newtheorem{lemma}[theorem]{Lemma}
\newtheorem{corollary}[theorem]{Corollary}
\theoremstyle{definition}
\newtheorem{definition}[theorem]{Definition}
\theoremstyle{remark}
\newtheorem{remark}[theorem]{Remark}
\numberwithin{equation}{section}
\newcommand{\NN}{\mathbb N}
\newcommand{\OO}{\mathcal O}
\newcommand{\PP}{\mathcal P}
\newcommand{\RR}{\overline{\mathbb R}_{+}}
\newcommand{\dn}{\mathord\downarrow}
\newcommand{\up}{\mathord\uparrow}
\newcommand{\DD}{\mathord\Downarrow}
\newcommand{\intr}{\operatorname{int}}

\newcommand{\tops}{\mathbf 1_X}
\newcommand{\bots}{\mathbf 0_X}
\setlist[enumerate]{label=\textup{(\roman*)},leftmargin=2.2em}

\title[Finite distributivity versus $\omega$-distributivity]
{A continuous $3$-distributive frame\break that is not $\omega$-distributive}
\date{}
\subjclass[2020]{06D10, 06B35, 54D45, 54H12}
\keywords{$\kappa$-distributivity, wide coframe, continuous frame,
wide web space, topological semilattice}

\author{Wei Luan}
\address{Key Laboratory of Computing and Stochastic Mathematics (Ministry of Education), School of Mathematics and Statistics, Hunan Normal University, Changsha, Hunan 410081, China}
\email{luanwei@hunnu.edu.cn}

\author{Qingguo Li}
\address{School of Mathematics, Hunan University, Changsha, Hunan, 410082, China}
\email{liqingguoli@aliyun.com}

\begin{document}
\begin{abstract}
We give a negative answer to the question, posed by Ern\'e, whether
every $3$-distributive lattice is $\omega$-distributive. More precisely,
we exhibit a continuous frame that is $\kappa$-distributive for every
integer $\kappa\geq 2$, but is not a wide coframe. The frame is the
open-set lattice of a compact, locally compact, countably based
$T_0$ topological meet-semilattice, obtained from Lawson's construction
in the logarithmic form described by Goubault-Larrecq. The failure of
$\omega$-distributivity is witnessed by an explicit matrix with
countably many nonempty finite rows: all row joins are the same nonzero
element, whereas every choice of one entry from each row has meet zero.
The same space answers negatively Ern\'e's accompanying question whether
every $4$-web space is a wide web space. All properties of the
construction needed for these conclusions are proved directly.
\end{abstract}
\maketitle

\section{Introduction}

Ern\'e \cite[Section~6]{Erne2009} introduced $\kappa$-distributivity
using covers of cardinality strictly smaller than $\kappa$.
For complete lattices, $\omega$-distributivity is equivalent to being
a \emph{wide coframe}, also called $\mathcal F$-distributivity.
After observing that $3$-distributivity implies $\kappa$-distributivity
for each finite $\kappa\geq 3$, he asked whether it also implies
$\omega$-distributivity \cite[p.~2065]{Erne2009}.
On the same page, he asked whether every $4$-web space is an
$\omega$-web space, that is, a wide web space.

We answer both questions negatively. The following statement uses
exactly the cardinal convention of \cite{Erne2009}.

\begin{theorem}\label{thm:main}
There exists a compact, locally compact, countably based $T_0$ space
$X$ whose specialization order has binary meets, with continuous
meet operation, such that its open-set lattice $L=\OO(X)$ satisfies:
\begin{enumerate}
\item $L$ is a continuous frame;
\item $L$ is $\kappa$-distributive for every integer $\kappa\geq 2$;
\item $L$ is not $\omega$-distributive, and hence is neither a wide
coframe nor completely distributive.
\end{enumerate}
Moreover, $X$ is a $\kappa$-web space for every integer $\kappa\geq 2$,
but is not a wide web space.
\end{theorem}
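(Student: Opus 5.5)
The plan is to take for $X$ Lawson's compact topological semilattice in the logarithmic form of Goubault-Larrecq. Its carrier is a closed subset of a countable power of $\RR$ with the Scott (upper) topology, so that meets are computed coordinatewise. I would first verify directly the structural claims: compactness, local compactness, countable base, $T_0$, existence of binary meets, and continuity of $\wedge\colon X\times X\to X$. Each coordinate map $(s,t)\mapsto\min(s,t)$ on $\RR$ with the upper topology is continuous, and the countable base comes from finitely supported basic opens $\{x: x_i>r_i,\ i\in F\}$ with rational $r_i$.

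Item (i) then follows from the standard fact that $\OO(X)$ is a continuous frame whenever $X$ is locally compact. Here $U\ll V$ holds exactly when some compact saturated $Q$ satisfies $U\subseteq Q\subseteq V$.

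For item (ii), I would use Erné's observation quoted in the introduction that $3$-distributivity implies $\kappa$-distributivity for every finite $\kappa\geq3$. So it suffices to prove $3$-distributivity, since $2$-distributivity is trivial or is implied by it. The route is through the web-space side: show that $X$ is a $\kappa$-web space for all finite $\kappa$, and invoke Erné's correspondence between $\kappa$-web spaces and $\kappa$-distributivity of $\OO(X)$. To see the web property, take $x\in U$ open. The continuity of meet gives an open $W\ni x$ with $W\wedge W\subseteq U$. Then $\up(y\wedge z)$ for $y,z\in W$ lies in $U$, and iterating yields finite meets from a neighbourhood staying inside $U$. This is precisely the semilattice condition giving webs: in a topological meet-semilattice, every point has a neighbourhood base of sets closed under finite meets, or at least under meets of fewer than $\kappa$ elements.

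The main obstacle is item (iii) together with the failure of the wide web property. Here I would write down an explicit matrix $(U_{n,j})_{n\in\NN,\,j\in F_n}$ of opens with finite nonempty rows. It should satisfy $\bigcup_j U_{n,j}=V\neq\emptyset$ for every $n$, while every choice function $\varphi$ gives $\intr\bigcap_n U_{n,\varphi(n)}=\emptyset$. The natural choice uses Lawson's coordinates: the $n$-th row splits $V$ according to which of finitely many logarithmic coordinates at level $n$ is large, a splitting that exists because of the defining relations of the carrier. A choice function then specifies one large coordinate per level. Any nonempty basic open constrains only finitely many coordinates, yet it must meet points where the chosen coordinates at some later level are small, so it cannot lie in the intersection. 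The delicate part is checking that the rows really cover the same $V$, which depends on the exact form of Lawson's relations; I expect this to require careful use of the construction.

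The failure of the wide web property follows by the same argument read pointwise. A web around a point of $V$ would have to select, level by level, a single entry, producing a neighbourhood inside $\bigcap_n U_{n,\varphi(n)}$, which is a contradiction. Finally, the failures of wide coframe and complete distributivity are immediate, since each implies $\omega$-distributivity.
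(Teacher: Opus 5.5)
The decisive gap is your model of the space. You take the meet of $X$ to be computed coordinatewise in a power of $\RR$ with the upper topology, and you get continuity of $\wedge$ from continuity of $\min$. If that were so, every basic open $V=X\cap\{x:x_i>r_i,\ i\in F\}$ would be closed under binary meets. Then $V\in\DD_\omega U$ whenever $V\subseteq U$. Indeed, for a finite cover $U_1,\ldots,U_n$ of $U$ with $V\nsubseteq U_i$ for all $i$, pick $x_i\in V\setminus U_i$. Their meet lies in $V\subseteq U$, hence in some $U_j$, which forces $x_j\in U_j$. So $\OO(X)$ would be $\omega$-distributive, contradicting (iii).

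In the actual construction the meet is not coordinatewise. By \eqref{eq:meet} its first coordinate is $\min\{s,t,\inf_i\sigma_i(A_i\cap B_i)\}$, which depends on infinitely many coordinates. So continuity of $\wedge$ is the genuinely nontrivial step. It requires the uniform estimate of Lemma~\ref{lem:slow-intersection}, which comes from $h(2k)-h(k)\to0$. That estimate bounds $\sigma_i(A_i'\cap B_i')$ from below simultaneously for all large $i$, using only the defining constraints of $X$. Your plan has no substitute for it. (Also, $X$ is closed in the compact Hausdorff topology of $K^{\sharp}$, not in the upper topology; in the upper topology it is dense, because it contains the top element.)

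The same confusion enters (ii). A topological meet-semilattice need not have a neighbourhood base of sets closed under finite meets, or under meets of fewer than $\kappa$ elements (the same thing for $\kappa\geq3$). This fails here at $\tops$ by Proposition~\ref{prop:scope}. One neighbourhood controlling all finite meets would be exactly the wide web property you later refute. What is true, and suffices, is the fixed-$n$ statement. Continuity of $\mu_n$ at $(x,\ldots,x)$ gives $V\ni x$, depending on $n$, with $\mu_n(V^n)\subseteq U$. This already gives $V\in\DD_{n+1}U$ as in Lemma~\ref{lem:finite}, with no need for Ern\'e's web correspondence.

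For (iii) you leave the matrix unspecified, and the step you call delicate is actually the easy one. Take $U_*=\{s>1\}$ and $a_{n,k}=U_*\cap\{k\in A_n\}$. Each row covers $U_*$ because $\sigma_n(\varnothing)=0$ forces $A_n\neq\varnothing$. The real content is the points $z_{n,k}$ with first coordinate $n$ and $n$th coordinate $N_n\setminus\{k\}$. They lie in $X$ because $\sigma_n(N_n\setminus\{k\})=n$, they enter every neighbourhood of $\tops$ once $n$ is large, and $z_{n,k}\notin a_{n,k}$. Since every nonempty open set contains $\tops$, every choice meet is empty. Once (iii) is in hand, your derivation of the failure of the wide web property is sound.
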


The underlying semilattice construction is not new. Lawson
\cite[Section~2, Example~1]{Lawson1970} constructed a compact Hausdorff
topological semilattice without a neighborhood base of subsemilattices
at its greatest element. We use the explicit logarithmic reformulation
and the non-Hausdorff topology described by Goubault-Larrecq
\cite{GL2022}. Our purpose is to derive the distributivity conclusions
above from this construction. We give the estimates, topological
arguments, and distributivity proofs in full; in particular, none of
the conclusions depends on an unproved transfer between Hausdorff,
Scott, and subspace topologies. The obstruction to
$\omega$-distributivity is presented as a countable matrix.

We work in ZFC. We write $\NN=\{1,2,\ldots\}$ and
$\NN_0=\{0,1,2,\ldots\}$. Empty joins and meets in a complete lattice
are its least and greatest elements, respectively. General background
on continuous lattices and Scott topology can be found in
\cite{GierzEtAl2003}.

\section{Distributivity and finite meet operations}\label{sec:prelim}

For a subset $A$ of a poset $P$, put
\[
\dn A=\{x\in P:(\exists a\in A)\ x\leq a\},\qquad
\up A=\{x\in P:(\exists a\in A)\ a\leq x\}.
\]
We write $\dn a$ and $\up a$ for singleton-generated sets.

\begin{definition}[{\cite[Section~6]{Erne2009}}]\label{def:kappa}
Let $\kappa\geq 2$ be a cardinal, or let $\kappa=\infty$.
Write
\[
\PP_\kappa L=\{A\subseteq L:|A|<\kappa\},\qquad
\PP_\infty L=\PP L.
\]
For $b$ in a complete lattice $L$, define
\begin{equation}\label{eq:approx}
\DD_\kappa b
 =\bigcap\left\{\dn A:A\in\PP_\kappa L,
                          \ b\leq\bigvee A\right\}.
\end{equation}
The lattice $L$ is \emph{$\kappa$-distributive} if
$b=\bigvee\DD_\kappa b$ for every $b\in L$.
\end{definition}

In particular, $\PP_\omega L$ consists of all finite subsets of $L$,
including the empty set. Consequently $\DD_\kappa 0=\varnothing$,
and the required equality at $0$ holds automatically. For $b\ne 0$,
membership $a\in\DD_3 b$ says precisely that
\[
b\leq c\vee d\quad\Longrightarrow\quad a\leq c\text{ or }a\leq d
\qquad(c,d\in L).
\]
Thus a convention using covers by $n$ elements corresponds to
Ern\'e's index $n+1$, with the empty-cover convention handled separately.

We record the equivalence with distributive identities, including a
proof to fix all conventions. Given a family $\mathcal A$ of subsets
of $L$, its \emph{crosscut system} is
\[
\mathcal A^{\#}
 =\left\{Z\subseteq\bigcup\mathcal A:
                 Z\cap A\ne\varnothing\text{ for every }A\in\mathcal A\right\}.
\]

\begin{lemma}\label{lem:distribution}
For a complete lattice $L$ and $\kappa$ as in
Definition~\ref{def:kappa}, the following are equivalent:
\begin{enumerate}
\item $L$ is $\kappa$-distributive;
\item for every $\mathcal A\subseteq\PP_\kappa L$,
\begin{equation}\label{eq:crosscut}
\bigwedge_{A\in\mathcal A}\bigvee A
 =\bigvee_{Z\in\mathcal A^{\#}}\bigwedge Z;
\end{equation}
\item for every set $I$, sets $J_i$ with $|J_i|<\kappa$, and
elements $a_{i,j}\in L$, one has
\begin{equation}\label{eq:matrix}
\bigwedge_{i\in I}\bigvee_{j\in J_i}a_{i,j}
 =\bigvee_{f\in\prod_{i\in I}J_i}\bigwedge_{i\in I}a_{i,f(i)}.
\end{equation}
For $\kappa=\infty$, there is no restriction on the sets $J_i$.
\end{enumerate}
\end{lemma}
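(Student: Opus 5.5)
The plan is to prove (ii)$\Leftrightarrow$(iii) first, which is pure bookkeeping, and then (i)$\Leftrightarrow$(ii). Throughout, note that the inequality ``$\geq$'' in \eqref{eq:crosscut} holds in any complete lattice. Indeed, for $Z\in\mathcal A^{\#}$ and $A\in\mathcal A$, pick $z\in Z\cap A$; then $\bigwedge Z\leq z\leq\bigvee A$. So only ``$\leq$'' has content.

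\emph{(ii)$\Leftrightarrow$(iii).} Given a matrix $(a_{i,j})$, put $A_i=\{a_{i,j}:j\in J_i\}$, so $|A_i|<\kappa$. Every choice function $f$ yields the crosscut $Z_f=\{a_{i,f(i)}\}$ with $\bigwedge Z_f=\bigwedge_i a_{i,f(i)}$. Conversely, every crosscut $Z$ contains the range of some choice function, so $\bigwedge Z\leq\bigwedge Z_f$. Hence the two right-hand sides coincide. Repetitions of a set $A$ in $\mathcal A$ are harmless. For the converse direction, index $\mathcal A$ by itself with $J_A=A$ and $a_{A,a}=a$.

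\emph{(i)$\Rightarrow$(ii).} Let $b=\bigwedge_{A\in\mathcal A}\bigvee A$. By (i), $b=\bigvee\DD_\kappa b$, so it suffices to show that each $x\in\DD_\kappa b$ lies below some $\bigwedge Z$. For each $A\in\mathcal A$ we have $b\leq\bigvee A$ with $A\in\PP_\kappa L$, so $x\in\dn A$. Hence $Z_x=\{a\in\bigcup\mathcal A: x\leq a\}$ meets every $A$, so $Z_x\in\mathcal A^{\#}$, and $x\leq\bigwedge Z_x$. The edge case $b=0$ is trivial, and $\mathcal A=\varnothing$ gives $b=1$ with $\mathcal A^{\#}=\{\varnothing\}$ and $\bigwedge\varnothing=1$.

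\emph{(ii)$\Rightarrow$(i).} This is the step needing care. Given $b$, take $\mathcal A=\{A\in\PP_\kappa L: b\leq\bigvee A\}$; this includes $\{b\}$, so the left side of \eqref{eq:crosscut} equals $b$. By (ii), $b=\bigvee_{Z\in\mathcal A^{\#}}\bigwedge Z$. I then claim that each $\bigwedge Z$ with $Z\in\mathcal A^{\#}$ lies in $\DD_\kappa b$. Indeed, for any $A\in\PP_\kappa L$ with $b\leq\bigvee A$, we have $A\in\mathcal A$, so some $z\in Z\cap A$ exists, and $\bigwedge Z\leq z\in A$. Thus $b\leq\bigvee\DD_\kappa b$, and since $\DD_\kappa b\subseteq\dn\{b\}$, equality follows.

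The main obstacle is a set-theoretic one: $\mathcal A$ here is a proper set (a subset of $\PP L$), so there is no issue. For $\kappa=\infty$ the same argument applies verbatim with $\PP_\infty L=\PP L$. I will also double-check the convention that $\varnothing\in\PP_\kappa L$, which makes $\DD_\kappa 0=\varnothing$ consistent with the computation above.
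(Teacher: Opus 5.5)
Your proof is correct and takes essentially the same approach as the paper. It uses the same crosscut $Z_x=(\bigcup\mathcal A)\cap\up x$ for (i)$\Rightarrow$(ii), the same family $\{A\in\PP_\kappa L: b\leq\bigvee A\}\ni\{b\}$ for (ii)$\Rightarrow$(i), and the choice-function/crosscut correspondence for (ii)$\Leftrightarrow$(iii); the only differences are that the paper does that equivalence last and spells out the empty-row and $I=\varnothing$ cases, which your argument covers vacuously.
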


\begin{proof}
The right-hand side of \eqref{eq:crosscut} is always at most its
left-hand side. Assume (i), and write the left-hand side as $b$.
For $a\in\DD_\kappa b$, the set
$Z_a=(\bigcup\mathcal A)\cap\up a$ meets every $A\in\mathcal A$,
by \eqref{eq:approx}. Thus $Z_a\in\mathcal A^\#$ and
$a\leq\bigwedge Z_a$. Taking the join over $\DD_\kappa b$ proves (ii).
This also covers an empty $\mathcal A$, with $Z_a=\varnothing$.

Conversely, assume (ii) and fix $b\in L$. Set
\[
\mathcal A_b=\{A\in\PP_\kappa L:b\leq\bigvee A\}.
\]
Since $\{b\}\in\mathcal A_b$, the left-hand side of
\eqref{eq:crosscut} for this family is $b$.
For each $Z\in\mathcal A_b^\#$ and $A\in\mathcal A_b$, an element
$z\in Z\cap A$ satisfies $\bigwedge Z\leq z$. Hence
$\bigwedge Z\in\DD_\kappa b$. Formula~\eqref{eq:crosscut} gives
$b\leq\bigvee\DD_\kappa b$; the reverse inequality follows by
using $A=\{b\}$ in \eqref{eq:approx}.

Finally, the equivalence of (ii) and (iii) follows from the Axiom of
Choice. For clarity, if all rows are nonempty, the set of selected
values $\{a_{i,f(i)}:i\in I\}$ is a crosscut of the family of row
sets. Conversely, for each crosscut $Z$, choose in each row a value
belonging to $Z$; the meet of those selected values is at least
$\bigwedge Z$. If a row is empty, both sides of
\eqref{eq:matrix} are $0$. If $I=\varnothing$, both sides are $1$.
Taking the rows to be arbitrary members of $\mathcal A$ proves
the converse implication as well.
\end{proof}

The terminology in \cite[Sections~1 and~6]{Erne2009} therefore makes
$\omega$-distributivity exactly the wide coframe property, and
$\infty$-distributivity exactly complete distributivity in ZFC.
The coframe property is equivalent to \emph{fixed $3$-distributivity},
which requires \eqref{eq:crosscut} for
$\mathcal A\subseteq\PP_3L$ with $\bigcap\mathcal A\ne\varnothing$.
We will establish the full condition of Definition~\ref{def:kappa}.

For a space $Y$, its specialization preorder is defined by
\[
x\leq y\quad\Longleftrightarrow\quad
\text{every open set containing $x$ contains $y$}.
\]
All open sets are upper sets for this preorder. In a $T_0$ space it is
an order. The complete lattice $\OO(Y)$ has joins and meets
\begin{equation}\label{eq:opens}
\bigvee\mathcal U=\bigcup\mathcal U,
\qquad
\bigwedge\mathcal U=\intr_Y\left(\bigcap\mathcal U\right).
\end{equation}

\begin{lemma}\label{lem:finite}
Let $Y$ be a $T_0$ space whose specialization order has binary meets.
If the meet map $Y\times Y\longrightarrow Y$ is continuous, then
$\OO(Y)$ is $\kappa$-distributive for every integer $\kappa\geq 2$.
\end{lemma}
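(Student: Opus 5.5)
The plan is to verify Definition~\ref{def:kappa} directly, taking $\DD_\kappa b$ for an open set $b=W$. Fix an integer $\kappa\geq 2$ and put $n=\max(\kappa-1,1)$. Every cover $A\in\PP_\kappa\OO(Y)$ then has at most $n$ members. Since $\DD_\kappa W\subseteq\dn W$, the inequality $\bigvee\DD_\kappa W\subseteq W$ is automatic. It therefore suffices to show that every point $x\in W$ lies in some open $V$ with $V\in\DD_\kappa W$. The case $W=\varnothing$ is trivial, as noted after Definition~\ref{def:kappa}.

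The key tool is the $n$-fold meet map $m_n(y_1,\dots,y_n)=y_1\wedge\dots\wedge y_n$. It is continuous $Y^n\to Y$, because it is a composite of copies of the continuous binary meet. Since $m_n(x,\dots,x)=x\in W$, continuity at the diagonal point yields open neighbourhoods $V_1,\dots,V_n$ of $x$ with $m_n(V_1\times\dots\times V_n)\subseteq W$. Put $V=V_1\cap\dots\cap V_n$. Then $x\in V$, every meet of at most $n$ points of $V$ lies in $W$ (repeat points to pad to exactly $n$), and in particular $V\subseteq W$ because $y=y\wedge\dots\wedge y$.

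Next I show $V\in\DD_\kappa W$. Take any $A=\{U_1,\dots,U_m\}$ with $m<\kappa$ and $W\subseteq U_1\cup\dots\cup U_m$. Here $m\geq1$, since $W\ne\varnothing$, so $m\leq n$. Suppose, for contradiction, that $V\not\subseteq U_k$ for every $k$, and choose $y_k\in V\setminus U_k$. The point $y=y_1\wedge\dots\wedge y_m$ is a meet of at most $n$ points of $V$, so $y\in W$, and hence $y\in U_k$ for some $k$. Open sets are upper sets for the specialization order, and $y\leq y_k$, so $y_k\in U_k$, which is a contradiction. Thus $V\subseteq U_k$ for some $k$, i.e.\ $V\in\dn A$. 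As $A$ was arbitrary, $V\in\DD_\kappa W$, and so $W=\bigcup_{x\in W}V_x\subseteq\bigvee\DD_\kappa W$.

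The only delicate point is the choice of the neighbourhood $V$. One must use the finiteness of $\kappa$ to fix the arity $n$ of the meet before choosing $V$, so that a single $V$ works uniformly for all covers of size less than $\kappa$. This is exactly what fails for $\kappa=\omega$. The remaining steps are routine.
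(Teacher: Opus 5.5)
Your proof is correct and takes the same approach as the paper. You fix the arity $n=\kappa-1$, use continuity of the $n$-fold meet at $(x,\ldots,x)$ to get a single neighbourhood $V$ whose meets of at most $n$ points lie in $W$, and derive $V\in\DD_\kappa W$ by choosing $y_k\in V\setminus U_k$ and using that open sets are upper sets; the only cosmetic difference is that you pad the tuple of points, whereas the paper lists the cover with repetitions.
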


\begin{proof}
Fix $\kappa\geq 2$ and set $n=\kappa-1$. The map
\[
\mu_n:Y^n\longrightarrow Y,
\qquad \mu_n(y_1,\ldots,y_n)=y_1\wedge\cdots\wedge y_n,
\]
is continuous, by induction, with $\mu_1$ the identity.
Let $U$ be a nonempty open set and $x\in U$. Continuity at
$(x,\ldots,x)$ gives open neighborhoods $V_1,\ldots,V_n$ of $x$
such that $\mu_n(\prod_{i=1}^n V_i)\subseteq U$.
For $V=\bigcap_{i=1}^n V_i$, one has
\begin{equation}\label{eq:finite-local}
x\in V\subseteq U,\qquad \mu_n(V^n)\subseteq U.
\end{equation}

We claim that $V\in\DD_\kappa U$ in $\OO(Y)$.
Any family of fewer than $\kappa$ open sets covering $U$ is nonempty
and can be listed, with repetitions, as $U_1,\ldots,U_n$.
If $V\nsubseteq U_i$ for every $i$, choose $x_i\in V\setminus U_i$.
Then $z=\bigwedge_{i=1}^n x_i\in U$, so $z\in U_j$ for some $j$.
Since $z\leq x_j$ and $U_j$ is upper, this implies $x_j\in U_j$,
a contradiction. The claim follows from \eqref{eq:approx}.
Every $x\in U$ belongs to such a $V$, proving
$U=\bigvee\DD_\kappa U$. For $U=\varnothing$, the approximation
set is empty and the equality is immediate.
\end{proof}

\section{The semilattice construction}\label{sec:construction}

We now specify the version of the Lawson construction used below;
the logarithmic formula is from \cite{GL2022}.
For real $t\geq 1$, define
\[
h(t)=\log_2\bigl(\log_2(1+t)\bigr).
\]
The function $h$ is increasing, $h(1)=0$, and
\begin{equation}\label{eq:slow}
h(2k)-h(k)
 =\log_2\!\left(\frac{\log_2(1+2k)}{\log_2(1+k)}\right)
 \longrightarrow 0\qquad(k\longrightarrow\infty).
\end{equation}
For $i\in\NN_0$, put
\begin{equation}\label{eq:coordinates}
M_i=2^{2^i}-1,\qquad N_i=\{1,\ldots,M_i\},\qquad S_i=\PP(N_i).
\end{equation}
Order $S_i$ by inclusion, and define $\sigma_i:S_i\to[0,\infty]$ by
\begin{equation}\label{eq:sigma}
\sigma_i(A)=
\begin{cases}
\infty,&A=N_i,\\
i-h(|N_i\setminus A|),&A\ne N_i.
\end{cases}
\end{equation}
Each $\sigma_i$ is monotone. Since $h(M_i)=i$, for proper $A$ we have
$0\leq\sigma_i(A)\leq i$, and in particular
\begin{equation}\label{eq:sigma-special}
\sigma_i(\varnothing)=0,
\qquad \sigma_i(N_i\setminus\{k\})=i\quad(k\in N_i).
\end{equation}

\begin{lemma}\label{lem:slow-intersection}
For real numbers $\tau>\varepsilon>0$, there is $i_0\in\NN_0$ such
that for all $i\geq i_0$ and $A,B\in S_i$,
\[
\sigma_i(A)>\tau,\quad\sigma_i(B)>\tau
\quad\Longrightarrow\quad
\sigma_i(A\cap B)>\tau-\varepsilon.
\]
\end{lemma}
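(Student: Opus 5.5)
We work directly with complements. If $A=N_i$ then $A\cap B=B$, so the claim is trivial when either set equals $N_i$; likewise if $A\cap B=N_i$. So assume $A,B$ are proper. Put $a=|N_i\setminus A|\geq1$ and $b=|N_i\setminus B|\geq1$. The hypothesis $\sigma_i(A)>\tau$ reads $h(a)<i-\tau$, and similarly $h(b)<i-\tau$. Since $N_i\setminus(A\cap B)=(N_i\setminus A)\cup(N_i\setminus B)$, the complement of $A\cap B$ has size $c\leq a+b\leq 2\max(a,b)$. Let $k=\max(a,b)$. Because $h$ is increasing, the conclusion $\sigma_i(A\cap B)>\tau-\varepsilon$ follows once
\[
h(2k)<h(k)+\varepsilon .
\]
Indeed, this gives $h(c)\leq h(2k)<h(k)+\varepsilon<i-\tau+\varepsilon$. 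Note that $A\cap B$ is proper, so the second case of \eqref{eq:sigma} applies.

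By \eqref{eq:slow} there is $K$ such that $h(2k)-h(k)<\varepsilon$ for all $k\geq K$. The main point is to handle small $k$. For those we use $\tau$ instead: since $h$ is increasing and $h(k)\leq h(K)$ for $k<K$, we have $\sigma_i(A\cap B)\geq i-h(2K)$ whenever $k<K$. Choose $i_0$ with $i_0>\tau-\varepsilon+h(2K)$, for instance $i_0>\tau+h(2K)$. Then for $i\geq i_0$ and $k<K$ we get $\sigma_i(A\cap B)\geq i-h(2K)>\tau-\varepsilon$. In this case we need not check that $2K\leq M_i$: $h$ is defined on all reals $\geq1$, and $c\leq 2k<2K$ with $h$ increasing gives the bound directly.

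Combining the two cases, $k\geq K$ and $k<K$, gives the lemma for all $i\geq i_0$. The only substantive input is the slow growth \eqref{eq:slow}. The one subtlety is that \eqref{eq:slow} holds only asymptotically, which is why large $i$ is needed to absorb small complements. Nothing in the argument uses $\tau>\varepsilon$ beyond the conclusion being meaningful.
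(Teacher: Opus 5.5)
Your proof is correct and is essentially the paper's argument. It uses the same $K$ from \eqref{eq:slow} and the same threshold $i_0>\tau-\varepsilon+h(2K)$, and splits into the same two cases $k\geq K$ and $k<K$, where $k=\max\{a,b\}$ and $c\leq 2k$.
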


\begin{proof}
By \eqref{eq:slow}, choose an integer $K\geq1$ such that
$h(2k)<h(k)+\varepsilon$ for every integer $k\geq K$.
Choose $i_0>\tau-\varepsilon+h(2K)$. Fix $i\geq i_0$.
If $A=N_i$ or $B=N_i$, the conclusion follows immediately.
Otherwise, set
\[
a=|N_i\setminus A|,\quad b=|N_i\setminus B|,\quad
k=\max\{a,b\},\quad c=|N_i\setminus(A\cap B)|.
\]
Then $1\leq c\leq a+b\leq 2k$. If $k\geq K$, then
\[
i-h(c)>i-h(k)-\varepsilon
 =\min\{\sigma_i(A),\sigma_i(B)\}-\varepsilon
 >\tau-\varepsilon.
\]
If $k<K$, monotonicity of $h$ gives
\[
i-h(c)\geq i-h(2K)\geq i_0-h(2K)>\tau-\varepsilon.
\]
In both cases, $i-h(c)=\sigma_i(A\cap B)$.
\end{proof}

Let $\RR=[0,\infty]$ with its usual order and Scott topology.
The nonempty proper open sets of $\RR$ are the intervals
$(r,\infty]$ with $0\leq r<\infty$. Give each finite lattice $S_i$
its upper-set topology, which equals its Scott topology. Let
\begin{equation}\label{eq:ambient}
K=\RR\times\prod_{i\in\NN_0}S_i
\end{equation}
with the product topology, and give
\begin{equation}\label{eq:X}
X=\{(s,(A_i)_{i\in\NN_0})\in K:
                         s\leq\sigma_i(A_i)\text{ for all }i\in\NN_0\}
\end{equation}
the subspace topology. Throughout the proof this is the specified
topology on $X$; it is not replaced by a Scott topology on $X$.
A base consists of
\begin{equation}\label{eq:base}
B(J;m,E)=X\cap
 \left(J\times\prod_{i=0}^{m}\up E_i\times\prod_{i>m}S_i\right),
\end{equation}
where $m\in\NN_0$, $E_i\subseteq N_i$, and either $J=\RR$ or
$J=(r,\infty]$ for a real $r\geq0$.

\begin{proposition}\label{prop:order}
The specialization order of $X$ is the componentwise order:
\[
(s,(A_i))\leq(t,(B_i))
\quad\Longleftrightarrow\quad s\leq t\text{ and }A_i\subseteq B_i
\text{ for every }i.
\]
In particular, $X$ is $T_0$. Its least and greatest elements are
\[
\bots=(0,(\varnothing)_i),\qquad
\tops=(\infty,(N_i)_i).
\]
Every nonempty open subset of $X$ contains $\tops$.
\end{proposition}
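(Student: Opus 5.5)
The plan is to compute the specialization order of $X$ directly from the base \eqref{eq:base}, using the standard facts that the specialization order of a subspace is the restriction of that of the ambient space, and that the specialization order of a product is the componentwise order. First, for $\RR$ with its Scott topology the specialization order is the usual order: every open set $(r,\infty]$ is upper, and if $s>t$ then $(t,\infty]$ contains $s$ but not $t$. For each finite $S_i$ with the upper-set topology the specialization order is inclusion: open sets are upper, and $\up A$ is open, so $A\leq B$ in the specialization order forces $B\in\up A$. Rather than quote the product and subspace facts, I will verify the two directions on $X$ directly with the basic sets.

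For ($\Leftarrow$), each basic set $B(J;m,E)$ is the intersection of $X$ with a set that is upper in the componentwise order on $K$, because $J$ and every $\up E_i$ are upper. Hence every open set of $X$, being a union of basic sets, is upper for the componentwise order. So componentwise $x\leq y$ implies $x\leq y$ in the specialization order. For ($\Rightarrow$), suppose $x=(s,(A_i))$ lies below $y=(t,(B_i))$ in the specialization order. If $s>t$, pick real $r$ with $t\leq r<s$; then $B((r,\infty];0,\varnothing)$, with $E_0=\varnothing$ so that $\up E_0=S_0$, contains $x$ but not $y$. If $A_j\not\subseteq B_j$ for some $j$, take $m=j$, $E_j=A_j$ and $E_i=\varnothing$ for $i<j$; then $B(\RR;j,E)$ contains $x$ but not $y$. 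Either case is a contradiction. Since the componentwise order is antisymmetric, $X$ is $T_0$.

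Next I check that $\bots$ and $\tops$ lie in $X$. For $\bots$, the condition $0\leq\sigma_i(\varnothing)$ holds trivially. For $\tops$, we need $\infty\leq\sigma_i(N_i)=\infty$, which holds by \eqref{eq:sigma}. These points are clearly the componentwise least and greatest elements of $K$, and hence of $X$. Finally, any nonempty open set is upper and contains some point $x\leq\tops$, so it contains $\tops$.

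There is no serious obstacle here. The only care needed is in choosing the separating basic sets, and in particular using $(r,\infty]$ with $r$ real and $r\geq t$. When $t$ is finite this choice is always possible. When $s>t$, $t$ must be finite, so the case $t=\infty$ never arises.
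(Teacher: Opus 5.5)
Your proof is correct and follows the paper's argument: the basic opens are upper for the componentwise order, the converse uses separating cylinders ($(r,\infty]$ in the first coordinate, or a condition in coordinate $j$), and the claims about $\bots$, $\tops$ and nonempty opens follow the same way. The only differences are cosmetic. You use $\up A_j$ where the paper requires a single element $e\in A_j\setminus B_j$, and you take any real $r\in[t,s)$ where the paper takes $r=t$.
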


\begin{proof}
The basic opens in \eqref{eq:base} are upper sets for the
componentwise order, proving one implication. Conversely, if $s>t$,
then $t<\infty$ and the open cylinder with first coordinate in
$(t,\infty]$ contains $(s,(A_i))$ but not $(t,(B_i))$.
If $A_j\nsubseteq B_j$, choose $e\in A_j\setminus B_j$; the open
cylinder requiring $e$ in the $j$th coordinate separates the two
points in the same direction. This proves the order assertion and
the $T_0$ property. The formulas for the bounds follow from
\eqref{eq:sigma-special} and $\sigma_i(N_i)=\infty$.
The last assertion follows because opens are upper sets.
\end{proof}

\begin{proposition}\label{prop:meet}
For $x=(s,(A_i))$ and $y=(t,(B_i))$ in $X$, their greatest lower
bound is
\begin{equation}\label{eq:meet}
x\wedge y=(u,(A_i\cap B_i)),\qquad
u=\min\left\{s,t,\inf_{i\in\NN_0}\sigma_i(A_i\cap B_i)\right\}.
\end{equation}
\end{proposition}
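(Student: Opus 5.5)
The argument has three steps. First we check that the displayed point $z=(u,(A_i\cap B_i))$ lies in $X$. Then we check that it is a lower bound of $x$ and $y$. Finally we show that every common lower bound lies below it. Throughout we use the componentwise description of the specialization order from Proposition~\ref{prop:order}.

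For membership, we need $u\le\sigma_i(A_i\cap B_i)$ for every $i\in\NN_0$. This holds because $u$ is at most $\inf_i\sigma_i(A_i\cap B_i)$ by definition. The value $u$ lies in $\RR=[0,\infty]$, since each $\sigma_i$ takes values in $[0,\infty]$ and $s,t\ge0$. So $z\in X$ by \eqref{eq:X}. Note that the infimum may be $\infty$, which happens when $A_i=B_i=N_i$ for all $i$; then $u=\min\{s,t\}$, which causes no problem.

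For the lower bound, we have $u\le s$ and $A_i\cap B_i\subseteq A_i$, so $z\le x$ componentwise, and symmetrically $z\le y$.

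For maximality, let $w=(v,(C_i))\in X$ satisfy $w\le x$ and $w\le y$. Then $v\le\min\{s,t\}$ and $C_i\subseteq A_i\cap B_i$ for each $i$. Since $w\in X$ and each $\sigma_i$ is monotone, we get
\[
v\le\sigma_i(C_i)\le\sigma_i(A_i\cap B_i)\qquad(i\in\NN_0),
\]
hence $v\le\inf_i\sigma_i(A_i\cap B_i)$. Therefore $v\le u$, and so $w\le z$.

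The only point needing care is the monotonicity of $\sigma_i$, which the paper asserts. It follows because $h$ is increasing, because $A\subseteq A'$ proper implies $|N_i\setminus A|\ge|N_i\setminus A'|\geq 1$, and because $\sigma_i(N_i)=\infty$ dominates every other value. Otherwise the proof is entirely routine; there is no real obstacle.
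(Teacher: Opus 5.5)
Your proof is correct and follows essentially the same route as the paper. You check membership in $X$ and the lower-bound property, then apply monotonicity of $\sigma_i$ to an arbitrary common lower bound $(v,(C_i))$ to get $v\le\sigma_i(C_i)\le\sigma_i(A_i\cap B_i)$ for every $i$, hence $v\le u$.
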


\begin{proof}
The point on the right belongs to $X$ and is below $x$ and $y$.
If $(v,(D_i))\in X$ is another common lower bound, then
$v\leq s,t$ and $D_i\subseteq A_i\cap B_i$ for every $i$.
Monotonicity of $\sigma_i$ yields
$v\leq\sigma_i(D_i)\leq\sigma_i(A_i\cap B_i)$ for every $i$.
Thus $v\leq u$, proving the assertion.
\end{proof}

\section{Compactness and continuity}\label{sec:topology}

Give $\RR$ its compact order topology, denoted by $\RR^{\sharp}$,
and each $S_i$ its discrete topology, denoted by $S_i^{\sharp}$.
The map $s\mapsto s/(1+s)$, with $\infty\mapsto1$, identifies
$\RR^{\sharp}$ with $[0,1]$. The product
\[
K^{\sharp}=\RR^{\sharp}\times\prod_{i\in\NN_0}S_i^{\sharp}
\]
is compact Hausdorff by the Tychonoff theorem, and its topology
refines the topology of $K$.

\Needspace{8\baselineskip}
\begin{lemma}\label{lem:closed}
The set $X$ is closed in $K^{\sharp}$.
\end{lemma}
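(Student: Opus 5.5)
The plan is to write $X$ as an intersection of closed subsets of $K^{\sharp}$, one for each coordinate index. For $i\in\NN_0$ put
\[
X_i=\{(s,(A_j)_j)\in K^{\sharp}: s\leq\sigma_i(A_i)\},
\]
so that $X=\bigcap_{i\in\NN_0}X_i$ by \eqref{eq:X}. Since an arbitrary intersection of closed sets is closed, it suffices to show that each $X_i$ is closed in $K^{\sharp}$.

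Fix $i$. The projection $\pi_i:K^{\sharp}\to S_i^{\sharp}$ is continuous and $S_i^{\sharp}$ is finite discrete, so $K^{\sharp}$ is the finite disjoint union of the clopen sets $\pi_i^{-1}(A)$ with $A\in S_i$. It therefore suffices to check that each piece $X_i\cap\pi_i^{-1}(A)$ is closed. On $\pi_i^{-1}(A)$ the condition reads $s\leq c$ with the constant $c=\sigma_i(A)\in[0,\infty]$. Hence
\[
X_i\cap\pi_i^{-1}(A)=\bigl([0,c]\times\textstyle\prod_j S_j^{\sharp}\bigr)\cap\pi_i^{-1}(A).
\]
Here $[0,c]$ is closed in the order topology of $\RR^{\sharp}$; when $c=\infty$ it is all of $\RR^{\sharp}$. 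So this piece is the intersection of two closed sets, and $X_i$, a finite union of closed sets, is closed.

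Alternatively, one can phrase this by saying that the map $(s,(A_j))\mapsto(s,\sigma_i(A_i))$ into $\RR^{\sharp}\times\RR^{\sharp}$ is continuous, because its second coordinate is locally constant. Then $X_i$ is the preimage of the closed set $\{(x,y):x\leq y\}$, which is closed since the order topology on $[0,\infty]\cong[0,1]$ is Hausdorff and the order is closed.

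I expect no real obstacle. The only point needing care is to use the compact order topology $\RR^{\sharp}$ rather than the Scott topology: the set $[0,c]$ is not Scott-closed-complement-friendly in the sense needed, but it is closed in $\RR^{\sharp}$. One must also treat the value $\sigma_i(N_i)=\infty$ correctly, which the case $c=\infty$ above does.
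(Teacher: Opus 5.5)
Your proof is correct and is essentially the paper's argument: your pieces $X_i\cap\pi_i^{-1}(A)$ are exactly the preimages of the sets $[0,\sigma_i(A)]\times\{A\}$, whose finite union $G_i\subseteq\RR^{\sharp}\times S_i^{\sharp}$ the paper pulls back along the coordinate projection before intersecting over $i$. One small correction to your closing remark: $[0,c]$ is also closed in the Scott topology of $\RR$. The refinement that actually matters here is the discreteness of $S_i^{\sharp}$, which makes the fibres $\pi_i^{-1}(A)$ clopen; passing to $K^{\sharp}$ is what later yields compactness.
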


\begin{proof}
For every $i$, the subset
\[
G_i=\bigcup_{A\in S_i}[0,\sigma_i(A)]\times\{A\}
      \subseteq\RR^{\sharp}\times S_i^{\sharp}
\]
is closed, being a finite union of closed sets.
The constraint $s\leq\sigma_i(A_i)$ is the inverse image of $G_i$
under the corresponding coordinate projection. The intersection
of these closed constraints is $X$.
\end{proof}

\begin{proposition}\label{prop:compact}
The space $X$ is compact and countably based. For each $x\in U\in\OO(X)$,
there is a compact saturated set $C$ such that
\begin{equation}\label{eq:compact-nbhd}
x\in\intr_X C\subseteq C\subseteq U.
\end{equation}
In particular, $X$ is locally compact.
\end{proposition}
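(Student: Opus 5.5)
The plan is to use the refinement $K^{\sharp}\to K$. By Lemma~\ref{lem:closed}, $X$ is closed in the compact Hausdorff space $K^{\sharp}$, so it is compact in the subspace topology inherited from $K^{\sharp}$. Since the topology of $K$ is coarser, the identity map $X^{\sharp}\to X$ is continuous. Hence $X$ is compact, and more generally every subset of $X$ that is closed in $K^{\sharp}$ is compact in $X$.

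For the countable base, I would show that the family \eqref{eq:base} can be restricted to rational $r$. There are countably many choices of $m$ and of $(E_0,\dots,E_m)$, since each $N_i$ is finite. If $x\in B(J;m,E)$ with $J=(r,\infty]$, then $s>r$, and we may pick a rational $q$ with $r\le q<s$; the set $B((q,\infty];m,E)$ then contains $x$ and lies inside $B(J;m,E)$. So the rational basic sets already form a base.

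For \eqref{eq:compact-nbhd}, take $x=(s,(A_i))\in U$ and choose a basic $B(J;m,E)$ with $x\in B(J;m,E)\subseteq U$.
\begin{itemize}
\item If $J=\RR$, take $C=B(J;m,E)$ itself. This set is closed in $K^{\sharp}$: each condition $E_i\subseteq A_i$ is clopen for the discrete topology. So $C$ is compact, it is open, and it is an upper set, hence saturated.
\item If $J=(r,\infty]$ with $r<s$, pick a real $r'$ with $r<r'<s$, and set
\[
C=X\cap\left([r',\infty]\times\prod_{i=0}^{m}\up E_i\times\prod_{i>m}S_i\right).
\]
This set is closed in $K^{\sharp}$, because $[r',\infty]$ is closed in the order topology, so $C$ is compact in $X$. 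It is an upper set in the order of Proposition~\ref{prop:order}, hence saturated. It contains the open set $B((r',\infty];m,E)$, which contains $x$, and it is contained in $B(J;m,E)\subseteq U$.
\end{itemize}
Local compactness follows at once.

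The main obstacle is only a check: compactness must be transferred through the coarser topology. This works because compactness passes to coarser topologies, and $C$ is closed in $X^{\sharp}$ as the intersection of $X$ with a $K^{\sharp}$-closed set. Saturation holds because, in a $T_0$ space, an upper set in the specialization order is exactly a saturated set (an intersection of opens). Since Proposition~\ref{prop:order} identifies the specialization order with the componentwise order, $C$ qualifies.
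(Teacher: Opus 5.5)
Your proposal is correct and follows the paper's proof essentially step for step. Compactness comes from closedness in $K^{\sharp}$ together with the coarser topology, the countable base from rational endpoints, and the compact saturated neighborhood from the same two-case construction, with $[t,\infty]$ replacing $(r,\infty]$ in the second case. The only differences are in how much is written out: you justify the rational-endpoint reduction explicitly, while the paper proves that upper sets are saturated, which holds in any space, so your $T_0$ qualifier is harmless but unnecessary.
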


\begin{proof}
Lemma~\ref{lem:closed} and the coarseness of the topology of $X$
give compactness. In \eqref{eq:base} one may restrict $r$ to
nonnegative rational numbers. There are countably many resulting
basic opens, since the coordinate spaces are finite.

Choose $B(J;m,E)$ with $x\in B(J;m,E)\subseteq U$.
If $J=\RR$, put $C=B(J;m,E)$. This set is closed in $K^{\sharp}$:
the conditions $A_i\supseteq E_i$ constrain finitely many finite
discrete coordinates. It is therefore compact in $X$, and it is open.
If $J=(r,\infty]$ and the first coordinate of $x$ is $s$, choose a
finite real number $t$ with $r<t<s$. Such a choice also exists when
$s=\infty$. Set
\[
C=X\cap\left([t,\infty]\times\prod_{i=0}^{m}\up E_i
                               \times\prod_{i>m}S_i\right).
\]
This set is closed in $K^{\sharp}$, hence compact in $X$, and
\[
x\in B((t,\infty];m,E)\subseteq C\subseteq B(J;m,E)\subseteq U.
\]
In both cases $C$ is upper for the specialization order, so it is
saturated. Indeed, in any space the saturation of a set $A$ is
$\up A$: if $y\notin\up A$, the union of open sets containing
individual points of $A$ and excluding $y$ is an open superset
of $A$ excluding $y$.
\end{proof}

Recall that a nonempty subset $D$ of a poset is \emph{directed}
if every two of its elements have an upper bound in $D$.
In a complete lattice, $a\ll b$ means that whenever
$b\leq\bigvee D$ for a directed set $D$, some $d\in D$ satisfies
$a\leq d$. The lattice is \emph{continuous} if
$b=\bigvee\{a:a\ll b\}$ for every $b$. A \emph{frame} is a
complete lattice in which finite meets distribute over arbitrary joins.

\begin{proposition}\label{prop:frame}
The complete lattice $\OO(X)$ is a continuous frame.
\end{proposition}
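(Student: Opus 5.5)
The plan is to verify the two properties separately.

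\emph{Frame.} The frame law holds in $\OO(Y)$ for every space $Y$. By \eqref{eq:opens}, binary meets of opens are intersections, since a finite intersection of opens is open. Joins are unions. Hence
\[
U\cap\bigcup\mathcal V=\bigcup_{V\in\mathcal V}(U\cap V).
\]
This needs nothing specific about $X$.

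\emph{Continuity.} I would use Proposition~\ref{prop:compact}. It suffices to show that every $U\in\OO(X)$ is the join of opens way below it. Fix $x\in U$ and take a compact saturated $C$ with $x\in\intr_X C\subseteq C\subseteq U$, as in \eqref{eq:compact-nbhd}. Put $W=\intr_X C$, an open neighbourhood of $x$.

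I claim $W\ll U$. Let $\mathcal D$ be a directed family of opens with $U\subseteq\bigcup\mathcal D$. Then $C\subseteq\bigcup\mathcal D$. By compactness, finitely many members of $\mathcal D$ cover $C$. By directedness, a single $D\in\mathcal D$ contains all of them, so $W\subseteq C\subseteq D$.

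Since every $x\in U$ lies in such a $W$, we get $U=\bigcup\{W:W\ll U\}$. The case $U=\varnothing$ is trivial, as the empty join is $\varnothing$. Hence $\OO(X)$ is continuous.

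There is no real obstacle: the topological content is already contained in Proposition~\ref{prop:compact}. The only points to check are that compactness of $C$ is used as a subspace of $X$, which it is, and that the directed-cover argument is phrased for arbitrary directed families, including the case where $U\subseteq\bigcup\mathcal D$ is not an equality.
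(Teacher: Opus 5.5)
Your proposal is correct and follows essentially the same route as the paper's proof: the frame law holds in any topology. For continuity, the open set $\intr_X C$, taken from a compact saturated neighbourhood $C\subseteq U$ as in Proposition~\ref{prop:compact}, is way below $U$ by compactness of $C$ together with directedness.
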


\begin{proof}
For every space, unions and finite intersections of opens obey
the frame identity. Let $x\in U\in\OO(X)$, and choose $C$ as in
Proposition~\ref{prop:compact}. Put $V=\intr_X C$.
If $\mathcal D\subseteq\OO(X)$ is directed and
$U\subseteq\bigcup\mathcal D$, compactness of $C$ gives finitely
many members of $\mathcal D$ covering $C$. A common upper bound
$D\in\mathcal D$ contains $C$, and hence $V$.
Thus $V\ll U$. Varying $x$ shows that $U$ is the union of opens
way below it. The assertion for $U=\varnothing$ follows from
$\varnothing\ll\varnothing$.
\end{proof}

\begin{proposition}\label{prop:continuous-meet}
The map $\wedge:X\times X\longrightarrow X$ in \eqref{eq:meet}
is continuous.
\end{proposition}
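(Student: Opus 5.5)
It suffices to check that preimages of subbasic open sets of $X$ under $\wedge$ are open in $X\times X$. The subbasic opens are of two kinds: cylinders $\{(s,(A_i)) : e\in A_j\}$ for fixed $j$ and $e\in N_j$, and cylinders $\{(s,(A_i)) : s>\tau\}$ for real $\tau\geq 0$. For the first kind, by \eqref{eq:meet} the condition $e\in A_j\cap B_j$ is the conjunction of $e\in A_j$ and $e\in B_j$. The preimage is therefore a product of two open cylinders. So all the work lies in the first coordinate.

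Fix $x=(s,(A_i))$ and $y=(t,(B_i))$ with $u>\tau$, where $u$ is as in \eqref{eq:meet}. We must find neighbourhoods $V\ni x$ and $W\ni y$ such that every $x'\in V$, $y'\in W$ has first coordinate of $x'\wedge y'$ exceeding $\tau$. Choose $\tau'$ with $\tau<\tau'<u$ and set $\varepsilon=\tau'-\tau$. Apply Lemma~\ref{lem:slow-intersection} to $\tau'>\varepsilon$; if $\tau=0$, first shrink $\varepsilon$ so that $\tau'>\varepsilon$, which we may do by taking $\tau'$ slightly larger relative to $\varepsilon$. This yields an index $i_0$.

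Define $V$ to consist of the points $(s',(A'_i))\in X$ with $s'>\tau'$ and $A'_i\supseteq A_i$ for $i<i_0$. Define $W$ analogously using $t$ and $(B_i)$. These are basic opens of the form \eqref{eq:base}, and they contain $x$ and $y$ because $s,t\geq u>\tau'$. Now take $x'=(s',(A'_i))\in V$ and $y'=(t',(B'_i))\in W$.
\begin{enumerate}
\item Clearly $s',t'>\tau'>\tau$.
\item For $i<i_0$, monotonicity of $\sigma_i$ gives $\sigma_i(A'_i\cap B'_i)\geq\sigma_i(A_i\cap B_i)\geq u>\tau'$.
\item For $i\geq i_0$, membership in $X$ gives $\sigma_i(A'_i)\geq s'>\tau'$ and $\sigma_i(B'_i)\geq t'>\tau'$. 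Lemma~\ref{lem:slow-intersection} then gives $\sigma_i(A'_i\cap B'_i)>\tau'-\varepsilon=\tau$.
\end{enumerate}

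There is one gap in step (iii): an infimum over infinitely many indices $i\geq i_0$ could equal $\tau$ even though each term exceeds $\tau$. To avoid this, apply the lemma with the threshold $\tau'$ and a smaller slack $\varepsilon/2$. Each term for $i\geq i_0$ is then greater than $\tau'-\varepsilon/2$, so the infimum is at least $\tau'-\varepsilon/2>\tau$. Finitely many indices $i<i_0$ cause no issue, since each exceeds $\tau'$ by step (ii).

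The main obstacle is exactly this uniform control of the tail $i\geq i_0$, and Lemma~\ref{lem:slow-intersection} is designed to supply it. The point is that the condition $s'>\tau'$, which is encoded in $X$, propagates a lower bound to every $\sigma_i(A'_i)$. Without that, finitely many coordinate constraints could never control infinitely many $\sigma_i$.
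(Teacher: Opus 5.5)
Your proof is correct and follows essentially the paper's route. You split off the finitely many indices below the threshold of Lemma~\ref{lem:slow-intersection} and handle them by monotonicity and the neighbourhoods $\up A_i$, $\up B_i$; you control the tail uniformly through the constraint $s'\leq\sigma_i(A_i')$, with a margin (your slack $\varepsilon/2$) that plays the role of the paper's choice $r<\tau-\varepsilon<\tau<u$. The remaining differences are cosmetic. You reduce to subbasic cylinders rather than treating a basic open $B(J;m,E)$ at once. In a final write-up you should take $i_0$ from the lemma applied to $(\tau',\varepsilon/2)$ from the outset, which also makes your muddled remark about $\tau=0$ unnecessary, since $\tau'>\varepsilon/2$ holds automatically.
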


\begin{proof}
Let $x=(s,(A_i))$, $y=(t,(B_i))$, and suppose
$x\wedge y\in W=B(J;m,E)$. Write the first coordinate of
$x\wedge y$ as $u$. For $i\leq m$, we have $E_i\subseteq A_i\cap B_i$.
If $J=\RR$, then $x,y\in W$ and every meet of two points of $W$
belongs to $W$. Hence $W\times W\subseteq\wedge^{-1}(W)$.

Suppose $J=(r,\infty]$. Since $u>r$, choose finite real numbers
$\tau,\varepsilon$ such that
\begin{equation}\label{eq:margin}
r<\tau-\varepsilon<\tau<u,\qquad\varepsilon>0.
\end{equation}
Then $\tau>\varepsilon$, $s,t>\tau$, and
$\sigma_i(A_i\cap B_i)>\tau$ for every $i$.
Choose $i_0$ from Lemma~\ref{lem:slow-intersection}, and put
$q=\max\{m,i_0\}$. The sets
\begin{align*}
P&=X\cap\left((\tau,\infty]\times\prod_{i=0}^{q}\up A_i
                                      \times\prod_{i>q}S_i\right),\\
Q&=X\cap\left((\tau,\infty]\times\prod_{i=0}^{q}\up B_i
                                      \times\prod_{i>q}S_i\right)
\end{align*}
are open neighborhoods of $x$ and $y$.
Take $x'=(s',(A_i'))\in P$ and $y'=(t',(B_i'))\in Q$.
For $i\leq q$, monotonicity gives
\[
\sigma_i(A_i'\cap B_i')\geq\sigma_i(A_i\cap B_i)>\tau.
\]
For $i>q$, the defining constraints of $X$ imply
$\sigma_i(A_i')\geq s'>\tau$ and
$\sigma_i(B_i')\geq t'>\tau$. Lemma~\ref{lem:slow-intersection}
therefore gives $\sigma_i(A_i'\cap B_i')>\tau-\varepsilon$.
Consequently
\[
\inf_i\sigma_i(A_i'\cap B_i')\geq\tau-\varepsilon,
\]
so the first coordinate of $x'\wedge y'$ is at least
$\tau-\varepsilon>r$. Also $E_i\subseteq A_i'\cap B_i'$ for
$i\leq m$. Hence $x'\wedge y'\in W$, proving
$P\times Q\subseteq\wedge^{-1}(W)$.
\end{proof}

\begin{corollary}\label{cor:finite-dist}
The continuous frame $L=\OO(X)$ is $\kappa$-distributive for every
integer $\kappa\geq2$.
\end{corollary}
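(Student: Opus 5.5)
This corollary follows by combining results already established; no new estimates are needed. I will apply Lemma~\ref{lem:finite} with $Y=X$. Its hypotheses must be checked in turn.

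First, $X$ is $T_0$ by Proposition~\ref{prop:order}, since the specialization order is the componentwise order. Second, this order has binary meets, given explicitly by \eqref{eq:meet} in Proposition~\ref{prop:meet}. Third, the meet map $X\times X\to X$ is continuous by Proposition~\ref{prop:continuous-meet}. Here $X\times X$ carries the product of the specified subspace topology, which is exactly the setting of Lemma~\ref{lem:finite}. That lemma then gives $\kappa$-distributivity of $\OO(X)$ for every integer $\kappa\geq 2$. The statement also calls $L$ a continuous frame, which is Proposition~\ref{prop:frame}.

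The only point needing care is that the meet in Lemma~\ref{lem:finite} is the meet of the specialization order of the given topology. It is not a meet for some other order or topology. Proposition~\ref{prop:order} identifies these two orders, so the meet of Proposition~\ref{prop:meet} is the right one. With that observed there is no real obstacle, and the proof is a short chain of citations.
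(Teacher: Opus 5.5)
Your proposal is correct and matches the paper's proof: apply Lemma~\ref{lem:finite} to $X$ using Propositions~\ref{prop:order}, \ref{prop:meet}, and~\ref{prop:continuous-meet}, and cite Proposition~\ref{prop:frame} for the continuous-frame part. Your remark that Proposition~\ref{prop:order} identifies the specialization order with the componentwise order, so that \eqref{eq:meet} is the meet Lemma~\ref{lem:finite} needs, is a sensible point to make explicit.
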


\begin{proof}
Apply Lemma~\ref{lem:finite} to Propositions~\ref{prop:order},
\ref{prop:meet}, and~\ref{prop:continuous-meet}, and use
Proposition~\ref{prop:frame}.
\end{proof}

\section{A countable obstruction to \texorpdfstring{$\omega$}{omega}-distributivity}
\label{sec:obstruction}

Let
\begin{equation}\label{eq:Ustar}
U_* =\{(s,(A_i))\in X:s>1\}.
\end{equation}
This is a nonempty open set, since $\tops\in U_*$.
For $n\geq2$ and $k\in N_n$, define
\begin{equation}\label{eq:z}
z_{n,k}=(n,(A_i^{n,k})),\qquad
A_i^{n,k}=\begin{cases}
N_n\setminus\{k\},&i=n,\\
N_i,&i\ne n.
\end{cases}
\end{equation}
Formula~\eqref{eq:sigma-special} gives $z_{n,k}\in X$.
The finite first coordinate $n$ is essential for this membership.

\begin{lemma}\label{lem:escape}
For every neighborhood $V$ of $\tops$, there is $n_0\geq2$ such that
$z_{n,k}\in V$ for all $n\geq n_0$ and all $k\in N_n$.
For each $n\geq2$, the finite family $\{z_{n,k}:k\in N_n\}$ has
no common lower bound in $U_*$.
\end{lemma}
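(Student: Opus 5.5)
For the first assertion I will reduce to basic open sets. Let $V$ be a neighborhood of $\tops$ and pick a basic open $B(J;m,E)$ from \eqref{eq:base} with $\tops\in B(J;m,E)\subseteq V$. If $J=(r,\infty]$, take $n_0\geq2$ with $n_0>\max\{m,r\}$; if $J=\RR$, take $n_0=\max\{2,m+1\}$. Now let $n\geq n_0$ and $k\in N_n$. Then the first coordinate of $z_{n,k}$ is $n>r$, so it lies in $J$. For every $i\leq m$ we have $i<n$, hence $A_i^{n,k}=N_i\supseteq E_i$. The coordinates with index $i>m$ are unconstrained. Since $z_{n,k}\in X$, it follows that $z_{n,k}\in B(J;m,E)\subseteq V$. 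The only point needing care is that the single coordinate where $z_{n,k}$ differs from $\tops$ has index $n>m$, so the basic open does not see it.

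For the second assertion, fix $n\geq2$ and suppose $w=(v,(D_i))\in X$ is a lower bound of all $z_{n,k}$, $k\in N_n$. By Proposition~\ref{prop:order} the specialization order is componentwise. So $D_n\subseteq\bigcap_{k\in N_n}(N_n\setminus\{k\})=\varnothing$. Membership of $w$ in $X$ then gives $v\leq\sigma_n(\varnothing)=0$ by \eqref{eq:sigma-special}. Hence $v=0\leq 1$, so $w\notin U_*$. This also agrees with Proposition~\ref{prop:meet}: the iterated meet has $n$th coordinate $\varnothing$ and first coordinate $0$.

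Neither step is a real obstacle. The only thing to get right is the choice of $n_0$ in the first part, which must exceed both $m$ and $r$ so that both the index condition and the first-coordinate condition hold.
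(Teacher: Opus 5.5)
Your proof is correct and follows essentially the same route as the paper. You reduce to a basic open $B(J;m,E)\ni\tops$ and take $n>\max\{m,r\}$, so that the only coordinate where $z_{n,k}$ differs from $\tops$ is unconstrained. You then use the componentwise order to force $D_n=\varnothing$, which makes the first coordinate $0$.
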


\begin{proof}
Choose a basic open $B(J;m,E)$ containing $\tops$ and contained
in $V$. If $J=\RR$, every $n>m$ with $n\geq2$ suffices. If
$J=(r,\infty]$, every $n>\max\{m,r\}$ with $n\geq2$ suffices.
Indeed, all coordinates constrained by this basic open equal
$N_i$ at $z_{n,k}$, and the first coordinate is $n$.

If $y=(s,(D_i))$ is below all $z_{n,k}$, then
\[
D_n\subseteq\bigcap_{k\in N_n}(N_n\setminus\{k\})=\varnothing.
\]
The defining inequality for $X$ gives
$s\leq\sigma_n(\varnothing)=0$. Thus $y\notin U_*$.
\end{proof}

For each $n\geq2$ and $k\in N_n$, the open coordinate cylinder
\[
H_{n,k}=\{(s,(A_i))\in X:k\in A_n\}
\]
also has the description
\begin{equation}\label{eq:complement}
H_{n,k}=X\setminus\dn z_{n,k}.
\end{equation}
Indeed, if $k\notin A_n$, then $A_n\subseteq N_n\setminus\{k\}$
and $s\leq\sigma_n(A_n)\leq n$, so $(s,(A_i))\leq z_{n,k}$.
The converse follows from the $n$th coordinate comparison.
Define the following entries of $L=\OO(X)$:
\begin{equation}\label{eq:entries}
a_{n,k}=U_*\cap H_{n,k}\qquad(n\geq2,\ k\in N_n).
\end{equation}

\begin{theorem}\label{thm:countable}
The matrix \eqref{eq:entries} satisfies
\begin{align}
\bigwedge_{n\geq2}\bigvee_{k\in N_n}a_{n,k}&=U_*\ne\varnothing,
                                                    \label{eq:left}\\
\bigvee_{f\in\prod_{n\geq2}N_n}\bigwedge_{n\geq2}a_{n,f(n)}
 &=\varnothing.                                    \label{eq:right}
\end{align}
In particular, $L$ is not $\omega$-distributive.
\end{theorem}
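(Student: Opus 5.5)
The plan is to verify \eqref{eq:left} and \eqref{eq:right} separately, using the formulas \eqref{eq:opens} for joins and meets in $L=\OO(X)$. Non-$\omega$-distributivity then follows from Lemma~\ref{lem:distribution}(iii) with $\kappa=\omega$: the rows $J_n=N_n$ are nonempty finite sets, and the two sides of \eqref{eq:matrix} differ.

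For \eqref{eq:left}, fix $n\geq2$ and compute the row join $\bigcup_{k\in N_n}a_{n,k}=U_*\cap\bigcup_k H_{n,k}$. By definition, $\bigcup_k H_{n,k}$ consists of the points with $A_n\neq\varnothing$. If $A_n=\varnothing$, then $s\leq\sigma_n(\varnothing)=0$, so the point is not in $U_*$. Hence $U_*\subseteq\bigcup_k H_{n,k}$, and every row join equals $U_*$. The meet of the constant family $U_*$ is $\intr_X U_*=U_*$. Finally, $U_*$ is nonempty because it contains $\tops$.

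For \eqref{eq:right}, fix $f\in\prod_{n\geq2}N_n$. I will show that $W_f=\intr_X\bigcap_{n\geq2}a_{n,f(n)}$ is empty; the union of these sets over all $f$ is then empty.
\begin{enumerate}
\item Suppose $W_f\neq\varnothing$. By Proposition~\ref{prop:order}, $W_f$ is a nonempty open set, so it contains $\tops$ and is an open neighborhood of $\tops$.
\item Lemma~\ref{lem:escape} gives $n_0$ such that $z_{n,k}\in W_f$ for all $n\geq n_0$ and all $k$. In particular, $z_{n_0,f(n_0)}\in W_f\subseteq H_{n_0,f(n_0)}$.
\item By \eqref{eq:complement}, $H_{n_0,f(n_0)}=X\setminus\dn z_{n_0,f(n_0)}$. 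This set cannot contain $z_{n_0,f(n_0)}$, which gives a contradiction.
\end{enumerate}
The same conclusion can be read directly from the coordinates: the $n_0$th coordinate of $z_{n_0,f(n_0)}$ is $N_{n_0}\setminus\{f(n_0)\}$, which omits $f(n_0)$. So $W_f=\varnothing$ for every $f$.

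The main obstacle is conceptual rather than computational. The plain intersection $\bigcap_n a_{n,f(n)}$ is nonempty, since it contains $\tops$; it is the interior in \eqref{eq:opens} that kills it. The key point is that nonempty open sets are neighborhoods of $\tops$, and the points $z_{n,k}$ escape into every such neighborhood while lying outside the relevant cylinders. Lemma~\ref{lem:escape} and \eqref{eq:complement} supply exactly these two facts.
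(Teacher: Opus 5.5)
Your proposal is correct and follows essentially the same route as the paper: each row join equals $U_*$ because $\sigma_n(\varnothing)=0$ forces $A_n\ne\varnothing$ on $U_*$, and each $b_f$ is empty because a nonempty open set contains $\tops$, hence by Lemma~\ref{lem:escape} contains some $z_{n,f(n)}$, which is not in $a_{n,f(n)}$. Your detour through \eqref{eq:complement} is harmless but unnecessary, since the coordinate observation you also give ($f(n)\notin A_n^{n,f(n)}$) is exactly what the paper uses.
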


\begin{proof}
If $(s,(A_i))\in U_*$, then $A_n\ne\varnothing$ for every $n$,
since otherwise $s\leq\sigma_n(\varnothing)=0$.
For fixed $n\geq2$, choose $k\in A_n$. This proves
$\bigcup_{k\in N_n}a_{n,k}=U_*$, and hence \eqref{eq:left}.

Fix $f\in\prod_{n\geq2}N_n$, and put
\[
b_f=\bigwedge_{n\geq2}a_{n,f(n)}
    =\intr_X\left(\bigcap_{n\geq2}a_{n,f(n)}\right).
\]
If $b_f$ were nonempty, Proposition~\ref{prop:order} would give
$\tops\in b_f$. Lemma~\ref{lem:escape}, applied to the open
neighborhood $b_f$, would then give $z_{n,f(n)}\in b_f$ for all
sufficiently large $n$. But $f(n)\notin A_n^{n,f(n)}$, so
$z_{n,f(n)}\notin a_{n,f(n)}$, contradicting
$b_f\subseteq a_{n,f(n)}$. Thus $b_f=\varnothing$ for every $f$,
which proves \eqref{eq:right}. Each $N_n$ is finite and nonempty,
so Lemma~\ref{lem:distribution} proves the last assertion.
\end{proof}

\begin{corollary}\label{cor:approxzero}
In Ern\'e's approximation notation,
$\DD_\omega U_*=\{\varnothing\}$ in $\OO(X)$.
Consequently, $L$ is not a wide coframe and is not completely
distributive.
\end{corollary}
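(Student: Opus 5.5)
We have to show two things: $\DD_\omega U_*=\{\varnothing\}$, and then the two consequences.

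\emph{Membership of $\varnothing$.} Since $U_*\neq\varnothing$, every finite cover $A$ of $U_*$ is nonempty. Hence $\varnothing\in\dn A$ for every such $A$, so $\varnothing\in\DD_\omega U_*$.

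\emph{No other member.} Let $V\in\DD_\omega U_*$ and suppose $V\ne\varnothing$; we derive a contradiction. By Proposition~\ref{prop:order}, $\tops\in V$. Lemma~\ref{lem:escape} then gives some $n\geq2$ with $z_{n,k}\in V$ for all $k\in N_n$. The proof of Theorem~\ref{thm:countable} shows that the finite family $\{a_{n,k}:k\in N_n\}$ covers $U_*$. It has at most $M_n<\omega$ members, so it belongs to $\PP_\omega L$. By \eqref{eq:approx}, $V\subseteq a_{n,k}$ for some $k$. But $z_{n,k}\in V$ while $z_{n,k}\notin H_{n,k}\supseteq a_{n,k}$, because $k\notin N_n\setminus\{k\}$. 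This is the contradiction.

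Alternatively, one can argue via Lemma~\ref{lem:distribution}: each $V\in\DD_\omega U_*$ lies below some row choice $\bigwedge_n a_{n,f(n)}=\varnothing$. Since that step needs a uniform choice of $f$, the direct argument above is cleaner.

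\emph{Consequences.} From $\DD_\omega U_*=\{\varnothing\}$ we get $\bigvee\DD_\omega U_*=\varnothing\neq U_*$. So $L$ fails Definition~\ref{def:kappa} at $\kappa=\omega$, and $L$ is not a wide coframe, by the remark identifying $\omega$-distributivity with the wide coframe property.

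For complete distributivity, take any $A\in\PP_\omega L$. It also lies in $\PP_\infty L$, so $\DD_\infty b\subseteq\DD_\omega b$ for every $b$. Hence $\bigvee\DD_\infty U_*=\varnothing\ne U_*$, and $L$ is not $\infty$-distributive, i.e.\ not completely distributive.

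The only delicate point is the escape step: checking that a single $n$ works for all $k\in N_n$ at once. This is exactly the uniformity in $k$ provided by Lemma~\ref{lem:escape}, so no real obstacle remains.
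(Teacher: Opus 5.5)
Your proof is correct and follows the paper's argument. You place $\varnothing$ in $\DD_\omega U_*$ because every cover of the nonempty set $U_*$ is nonempty, and you exclude a nonempty $V$ by applying Lemma~\ref{lem:escape} at $\tops$ to the finite row $\{a_{n,k}:k\in N_n\}$, which covers $U_*$ while $z_{n,k}\in V\setminus a_{n,k}$ for every $k$; for the consequences you argue directly from Definition~\ref{def:kappa} and the inclusion $\DD_\infty b\subseteq\DD_\omega b$, whereas the paper also cites Theorem~\ref{thm:countable} and Lemma~\ref{lem:distribution}, and both are valid.
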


\begin{proof}
Since $U_*\ne\varnothing$, the empty open set belongs to
$\DD_\omega U_*$. Suppose a nonempty open $V$ belongs to this set.
It contains $\tops$, so Lemma~\ref{lem:escape} gives $n\geq2$
with $z_{n,k}\in V$ for every $k\in N_n$.
The row $(a_{n,k})_{k\in N_n}$ covers $U_*$, but no entry contains
$V$, since $z_{n,k}\in V\setminus a_{n,k}$.
This contradicts \eqref{eq:approx}.
The final assertions also follow from Theorem~\ref{thm:countable}
and Lemma~\ref{lem:distribution}.
\end{proof}

\section{Web spaces and the finite-cardinal distinction}
\label{sec:web}

We finish by giving the topological conclusion directly in
Ern\'e's terminology. For a subset $A$ of a specialization poset,
write $A^{\downarrow}=\{y:y\leq a\text{ for all }a\in A\}$.

\begin{definition}[{\cite[Section~6]{Erne2009}}]\label{def:web}
A space $Y$ is a \emph{$\kappa$-web space} if, for every $x\in Y$
and every neighborhood $U$ of $x$, there is a neighborhood $V$ of
$x$ such that
\[
x\in A\subseteq V,\quad |A|<\kappa
\quad\Longrightarrow\quad A^{\downarrow}\cap U\ne\varnothing.
\]
An $\omega$-web space is called a \emph{wide web space}.
\end{definition}

\begin{proposition}\label{prop:web}
The space $X$ is a $\kappa$-web space for every integer
$\kappa\geq2$, but is not a wide web space.
\end{proposition}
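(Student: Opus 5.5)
The plan is to prove the two halves separately: the positive half by the same continuity argument used in Lemma~\ref{lem:finite}, and the negative half by reusing the points $z_{n,k}$ of Lemma~\ref{lem:escape}.

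\emph{The $\kappa$-web property for finite $\kappa$.} Fix an integer $\kappa\geq2$, put $n=\kappa-1$, and let $x\in U$ with $U$ open. This is harmless, since any neighborhood contains an open neighborhood and shrinking $U$ only strengthens the conclusion. Continuity of $\mu_n$ at $(x,\ldots,x)$ is given by Proposition~\ref{prop:continuous-meet} and induction, exactly as in the proof of Lemma~\ref{lem:finite}. As in \eqref{eq:finite-local}, it yields an open $V$ with $x\in V$ and $\mu_n(V^n)\subseteq U$. Now let $x\in A\subseteq V$ with $|A|<\kappa$. Then $A$ is nonempty, and it can be listed with repetitions as $y_1,\ldots,y_n$. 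The point $z=y_1\wedge\cdots\wedge y_n$ exists by Proposition~\ref{prop:meet} and lies in $U$. It is a lower bound of $A$, so $z\in A^{\downarrow}\cap U$. No further work is needed here.

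\emph{Failure of the wide web property.} Take $x=\tops$ and $U=U_*$, which is an open neighborhood of $\tops$. Let $V$ be any neighborhood of $\tops$. Lemma~\ref{lem:escape} supplies $n\geq2$ with $z_{n,k}\in V$ for all $k\in N_n$. Set
\[
A=\{\tops\}\cup\{z_{n,k}:k\in N_n\}.
\]
This set is finite, so $|A|<\omega$, and $x\in A\subseteq V$. Since $\tops$ is the greatest element, $A^{\downarrow}$ is exactly the set of common lower bounds of the $z_{n,k}$. The second assertion of Lemma~\ref{lem:escape} says none of these lies in $U_*$. Therefore $A^{\downarrow}\cap U_*=\varnothing$, and $X$ is not an $\omega$-web space.

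\emph{Expected obstacle.} The mathematics is already done by the earlier results. The only points needing care are matching Ern\'e's conventions: "neighborhood" versus open neighborhood, the requirement $x\in A$, and the strict inequality $|A|<\kappa$. The last is handled by choosing $n=\kappa-1$.
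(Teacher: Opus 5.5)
Your proof is correct and follows the paper's argument essentially verbatim. For the finite-web half you take the neighborhood $V$ from \eqref{eq:finite-local} with $n=\kappa-1$ and list $A$ as an $n$-tuple by repetition; for the failure of the wide web property you take $x=\tops$, $U=U_*$, and $A=\{\tops\}\cup\{z_{n,k}:k\in N_n\}$ with $n$ supplied by Lemma~\ref{lem:escape}.
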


\begin{proof}
Fix finite $\kappa\geq2$, $x\in X$, and a neighborhood $U$ of $x$.
Choose an open neighborhood $U'\subseteq U$.
The construction in \eqref{eq:finite-local}, with $n=\kappa-1$,
provides an open neighborhood $V$ of $x$ such that
$\mu_n(V^n)\subseteq U'$.
Every nonempty $A\subseteq V$ with $|A|\leq n$ can be listed as
an $n$-tuple by repetition. Its meet therefore belongs to $U'$,
and is a lower bound of $A$. This proves the finite-web assertion.

For failure of the wide web property, take $x=\tops$ and $U=U_*$.
Given any neighborhood $V$ of $\tops$, choose $n$ as in
Lemma~\ref{lem:escape}, and put
\[
A=\{\tops\}\cup\{z_{n,k}:k\in N_n\}.
\]
Then $A$ is finite, $\tops\in A\subseteq V$, and
$A^{\downarrow}\cap U_*=\varnothing$ by that lemma.
\end{proof}

\begin{proof}[Proof of Theorem~\ref{thm:main}]
The topological assertions are Propositions~\ref{prop:order},
\ref{prop:compact}, and~\ref{prop:continuous-meet}.
The lattice assertions are Proposition~\ref{prop:frame},
Corollary~\ref{cor:finite-dist}, and Theorem~\ref{thm:countable}.
The web-space assertions are Proposition~\ref{prop:web}.
\end{proof}

The finite-cardinal distinction also affects a statement printed
later in \cite{Erne2009}. We make the consequence explicit to avoid
relying on that statement outside its valid scope.

\begin{proposition}\label{prop:scope}
For every finite $\kappa\geq3$, the implication
\textup{(b)}$\Rightarrow$\textup{(a)} in
\cite[Proposition~6, p.~2066]{Erne2009} fails as printed.
Namely, a $\kappa$-web space whose specialization order is a
$\kappa$-meet-semilattice need not have a neighborhood base of
$\kappa$-meet-subsemilattices.
\end{proposition}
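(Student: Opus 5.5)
We use the space $X$ of Section~\ref{sec:construction} as the counterexample, for every finite $\kappa\geq3$ simultaneously. By Proposition~\ref{prop:web}, $X$ is a $\kappa$-web space. Propositions~\ref{prop:order} and~\ref{prop:meet} show that its specialization order has all binary meets. Hence every nonempty subset of size less than $\kappa$ has a meet, given by iterated binary meets. The empty subset has meet $\tops$, the greatest element. So the specialization order is a $\kappa$-meet-semilattice in Erné's sense.

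It remains to show that $\tops$ has no neighborhood base of $\kappa$-meet-subsemilattices. It suffices to exhibit one open neighborhood, namely $U_*$, that contains no neighborhood $V$ of $\tops$ closed under meets of fewer than $\kappa$ elements. Suppose such a $V\subseteq U_*$ existed. Since $\kappa\geq3$, $V$ is closed under binary meets, and by induction under all finite nonempty meets. Lemma~\ref{lem:escape} supplies $n\geq2$ with $z_{n,k}\in V$ for every $k\in N_n$. Then the meet $\bigwedge_{k\in N_n}z_{n,k}$ is computed by iterating \eqref{eq:meet}, and it lies in $V\subseteq U_*$. This contradicts the second assertion of Lemma~\ref{lem:escape}, which says that this finite family has no common lower bound in $U_*$.

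One convention needs care. If ``$\kappa$-meet-subsemilattice'' means closure under meets of subsets of size $<\kappa$ in $X$, then for $\kappa=3$ it is exactly binary-meet closure, and the induction above applies. For larger finite $\kappa$, closure under meets of size $<\kappa$ includes binary meets, so the same induction applies. If Erné also requires closure under the empty meet, that adds only $\tops\in V$, which holds automatically for a neighborhood of $\tops$. The main thing I would check is that this reading matches the statement of \cite[Proposition~6]{Erne2009}. Whatever the reading, any notion demanding at least binary-meet closure is refuted by the argument above.
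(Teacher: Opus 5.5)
Your proof is correct and follows essentially the same argument as the paper. You take $X$ with $U_*$ as the bad neighborhood of $\tops$, reduce closure under meets of fewer than $\kappa$ elements to binary-meet closure (using $\kappa\geq 3$) and hence to closure under all nonempty finite meets, and contradict the second part of Lemma~\ref{lem:escape} via the meet of $\{z_{n,k}:k\in N_n\}$; your extra remark on the empty-meet convention is harmless, since every neighborhood of $\tops$ contains $\tops$.
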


\begin{proof}
The space $X$ has a greatest element and binary meets, so it has
all finite meets. Proposition~\ref{prop:web} gives the
$\kappa$-web property. If $\tops$ had a neighborhood $N$ contained
in $U_*$ and closed under meets of fewer than $\kappa$ elements,
then $N$ would in particular be closed under binary meets.
It would therefore contain the meet of each nonempty finite
subset of $N$. Choose $n$ from Lemma~\ref{lem:escape} with
$z_{n,k}\in N$ for every $k\in N_n$. Their meet would belong to
$N\subseteq U_*$, contrary to the same lemma.
\end{proof}

\begin{remark}\label{rem:regular}
The proof of the cited implication passes from $\kappa$ to the
least regular cardinal $\bar\kappa\geq\kappa$ and, in the
nonregular case, uses the assertion that $\bar\kappa$ is the
cardinal successor of $\kappa$. This assertion concerns infinite
cardinals and does not cover finite $\kappa\geq3$.
With the union characterization of regularity specified in
\cite[Theorem~6]{Erne2009}, the least regular cardinal above such
a finite $\kappa$ is $\omega$. The $\omega$-small sets have
unbounded finite cardinalities, and the finite-web hypothesis
does not provide one neighborhood controlling all their meets.
Proposition~\ref{prop:scope} concerns only the finite-cardinal
scope of that printed implication. The proofs above use neither
that implication nor the general web-space characterization.
\end{remark}


\begin{thebibliography}{9}

\bibitem{Erne2009}
M.~Ern\'e,
\emph{Infinite distributive laws versus local connectedness and
compactness properties},
Topology Appl. \textbf{156} (2009), no.~12, 2054--2069.
\href{https://doi.org/10.1016/j.topol.2009.03.029}
{doi:10.1016/j.topol.2009.03.029}.

\bibitem{GierzEtAl2003}
G.~Gierz, K.~H.~Hofmann, K.~Keimel, J.~D.~Lawson,
M.~Mislove, and D.~S.~Scott,
\emph{Continuous Lattices and Domains},
Encyclopedia of Mathematics and its Applications, vol.~93,
Cambridge University Press, Cambridge, 2003.

\bibitem{GL2022}
J.~Goubault-Larrecq,
\emph{Topological semilattices with small semilattices},
Non-Hausdorff Topology and Domain Theory, electronic supplement,
2022.
\url{https://projects.lsv.ens-paris-saclay.fr/topology/?page_id=4939}
(accessed September~5, 2026).

\bibitem{Lawson1970}
J.~D.~Lawson,
\emph{Lattices with no interval homomorphisms},
Pacific J. Math. \textbf{32} (1970), no.~2, 459--465.
\href{https://doi.org/10.2140/pjm.1970.32.459}
{doi:10.2140/pjm.1970.32.459}.

\end{thebibliography}
\end{document}